\theoremstyle{plain}
\newtheorem{theorem}{Theorem}[section]
\newtheorem{lemma}[theorem]{Lemma}
\newtheorem{corollary}[theorem]{Corollary}
\theoremstyle{definition}
\newtheorem{definition}[theorem]{Definition}
\newtheorem{example}[theorem]{Example}
\theoremstyle{remark}
\title{A short nonstandard proof of the Doob-Meyer and Dol{\'e}ans-Dade theorems}
\author{Takashi Matsunaga}
\address{Department of Medical Informatics, Osaka International Cancer Institute, Japan}
\email{matsunaga-ta@nifty.com}
\begin{document}

\renewcommand{\thefootnote}{\fnsymbol{footnote}}
\footnotetext[0]{2020 Mathematics Subject Classification. Primary nonstandard analysis 26E35; Secondary martingale with continuous parameter 60G44.}

\begin{abstract} 
Using nonstandard analysis, a very short and elementary proof of the Doob-Meyer decomposition and the Dol{\'e}ans Dade theorems is provided. 
\end{abstract}

\maketitle

\section{Introduction}
Although the Doob-Meyer theorem (Theorem \ref{predictable}) is one of the most fundamental theorems in the theory of continuous time stochastic processes, proofs in standard textbooks are long and not straightforward (Bass \cite{Bass} and Kallenberg \cite{Kallenberg}). Recently, Beiglb\"{o}ck \cite{Beigl} proves briefly the exsitence of the Doob-Meyer decomposition for c\`{a}dl\`{a}g submartingales on a probablity space with a right-continuos complete filtration.
 
On the other hand, Lindstr{\o}m \cite{Lindstrom} offers a Doob-Meyer decomposition regarding hyperfinite submartingales, which might yield a nonstandard proof of the Doob-Meyer decomposition for (standard) submartingales. The procedure, however, would be lengthy because one need to establish lifting and pushing down theorems (Hoover and Perkins \cite{Hoover}) and some representation of a standard filtered probability space by a Loeb adapted space (Fajardo and Keisler \cite{Fajardo}).      

By nonstandard analysis, we provide a very short and elemantary proof of both the existence and uniqueness of the Doob-Meyer decomposition for general (not necessarily c\`{a}dl\`{a}g) submartingales as well as the Dol{\'e}ans-Dade theorem as a simple corollary.

\section{Preliminaries}
We assume some familiarity with nonstandard analysis. A quick introduction to nonstandard analysis is presented in the Appendix. Consult Davis \cite{Davis} for details. 

\begin{lemma}\label{lem:standardize}
Let $(M, \mathcal{A}, \nu)$ be a standard finite measure space, let $\mathcal{B}$ be a standard sub $\sigma$-algebra of $\mathcal{A}$, and let X be a nonnegative $^*\mathcal{A}$-measurable function on $^*M$. Suppose  that for any $0 < \epsilon \in \mathbb{R}$, there exists $k \in \mathbb{R}$ such that
   $$ ^*E(X; X \ge k) \equiv \raisebox{2.0ex}{\rm \scriptsize *} \! \! \! 
   \int_{X \ge k} X \ d\nu < \epsilon. $$
Then, there exists a standard almost surely (a.s.) unique nonnegative integrable function $X^\infty$ on $M$ such that
   $$ \ E({X^\infty}; A) \simeq \,\! ^*E(X; \,\! ^*\!A) \equiv
   \raisebox{2.0ex}{\rm \scriptsize *}  \! \! \! \int_{^*\!A} X \ d\nu 
   \ \ \ \ \ \rm( \simeq \ : \ infinitesimaly \ close \ to), $$
   $$ E(Y{X^\infty}; A) \simeq \,\! ^*E(^*YX; \,\! ^*\!A), \ \ \ \ \
   E(E(X^\infty|\mathcal{B}); A)
   \simeq \,\! ^*E(^*E(X|^*\mathcal{B}); \,\! ^*\!A) $$
for all standard $A \in \mathcal{A}$ and all standard bounded measurable function $Y$ on $M$.
\end{lemma}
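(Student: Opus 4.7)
The plan is to construct $X^\infty$ as a Radon--Nikodym derivative. I would define a set function on $\mathcal{A}$ by
\[
\mu(A) \,=\, \operatorname{st}\bigl({^*E(X;{^*A})}\bigr),
\]
check that $\mu$ is a finite measure on $(M,\mathcal{A})$ absolutely continuous with respect to $\nu$, and set $X^\infty = d\mu/d\nu$. Applying the hypothesis with $\epsilon=1$ gives a standard $k$ with ${^*E(X; X\ge k)} < 1$, hence ${^*E(X)} \le 1 + k\,\nu(M)$ is finite; this makes $\mu$ well defined and finite, and finite additivity is immediate from $^*$-linearity of the internal integral.

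The key step is $\sigma$-additivity, i.e.\ $\mu(A_n)\to 0$ when $A_n\downarrow\emptyset$. Given $\epsilon>0$, pick $k$ by the hypothesis and split
\[
{^*E(X;{^*A_n})} \;\le\; {^*E(X; X\ge k)} + k\cdot \nu(A_n) \;<\; \epsilon + k\,\nu(A_n);
\]
since $\nu(A_n)\to 0$, $\limsup_n\mu(A_n)\le\epsilon$, and $\epsilon$ is arbitrary. The identical split shows $\nu(A)=0\Rightarrow \mu(A)=0$, so $\mu\ll\nu$. Radon--Nikodym then produces an a.s.\ unique nonnegative $X^\infty\in L^1(M,\mathcal{A},\nu)$ with $E(X^\infty; A)=\mu(A)\simeq {^*E(X;{^*A})}$, yielding the first equivalence together with the uniqueness of $X^\infty$.

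For the second equivalence I would approximate a bounded standard $Y$ by standard simple functions $Y_n$. For a simple $Y=\sum_i c_i\mathbf{1}_{B_i}$ with disjoint $B_i\in\mathcal{A}$,
\[
{^*E({^*Y}\,X;{^*A})} = \sum_i c_i\,{^*E(X;{^*(A\cap B_i)})} \;\simeq\; \sum_i c_i\,E(X^\infty; A\cap B_i) = E(Y X^\infty; A),
\]
and passing to a uniform limit is safe because $|{^*E({^*Y}X;{^*A})}-{^*E({^*Y_n}X;{^*A})}|\le \|Y-Y_n\|_\infty\cdot {^*E(X)}$ with ${^*E(X)}$ finite, while $E(Y_n X^\infty;A)\to E(Y X^\infty;A)$ by standard dominated convergence.

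The third equivalence reduces to the second via the $^*$-tower property and transfer. Setting $Y = E(\mathbf{1}_A\mid\mathcal{B})$, a standard bounded $\mathcal{B}$-measurable function on $M$, transfer gives ${^*E(\mathbf{1}_{^*A}\mid {^*\mathcal{B}})} = {^*Y}$, and the $^*$-tower property lets me pull this $^*\mathcal{B}$-measurable factor through:
\[
{^*E}\bigl({^*E(X|{^*\mathcal{B}})};{^*A}\bigr) = {^*E(X\cdot {^*Y})} \;\simeq\; E(Y\cdot X^\infty) = E\bigl(E(X^\infty|\mathcal{B});A\bigr),
\]
where the middle relation is the second equivalence with $A=M$ and the last is the standard tower property. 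The main obstacle I anticipate is the $\sigma$-additivity step above: it is precisely where the nonstandard uniform-integrability hypothesis pays for itself, ensuring both countable additivity and absolute continuity of $\mu$ in a single clean estimate. Everything else is routine bookkeeping with Radon--Nikodym, linearity, transfer, and the tower property.
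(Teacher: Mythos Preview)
Your proposal is correct and follows essentially the same route as the paper: define $\mu(A)=\operatorname{st}({^*E(X;{^*A})})$, use the uniform-integrability hypothesis to get a finite measure absolutely continuous with respect to $\nu$, apply Radon--Nikodym for the first formula, extend to bounded $Y$ by approximation, and derive the third formula from the second via $Y=E(\mathbf{1}_A\mid\mathcal{B})$ and the tower property. The only cosmetic difference is that the paper invokes monotone convergence for the second formula where you use uniform approximation by simple functions; both work.
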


\begin{proof}
We firstly note that for $A \in \mathcal{A}$ and $0<k \in \mathbb{R}$,
   $$ 0 \le \,\! ^*E(X; \,\!^*\!A) \le \,\! ^*E(X; X \ge k) + k\,^*\!\nu(^*\!A). $$
Hence 
$ 0 \le \,\! ^*E(X; \,\!^*A)$ is finite by assumption so that we can define the standard function $\mu$ on $\mathcal{A}$ by $\mu(A) = {\rm st}(^*E(X; \,\! ^*\!A))$ (st: standard part)
for $A \in \mathcal{A}$. From the above equalities, it is easily seen that $\mu$ is a measure on $\mathcal{A}$, which is absolutely continuous with respect to $\nu$. The Radon-Nikodym theorem yields
 the first formula. The second formula easily follows from the monotone convergence theorem. Finally using the second formula, we have (denoting  the indicator function of $A$ by $\mathbb{I}_A$)
\begin{align*} 
   E(E(X^\infty|\mathcal{B});A)
   &= E(E(X^\infty|\mathcal{B})\mathbb{I}_A) 
   = E(X^\infty E(\mathbb{I}_A|\mathcal{B})) \\
   &\simeq \,\! ^*E(X \, ^*E(\mathbb{I}_{^*\!A}|^*\! \mathcal{B}))
   = \,\! ^*E(^*E(X|^*\mathcal{B})\mathbb{I}_ {^*\!A})
   = \,\! ^*E(^* E(X|^*\mathcal{B}); \,\! ^*\!A). \\
\end{align*}
\end{proof}

\begin{corollary}\label{cor:standardize}
If $X$ is $^*\mathcal{B}$-measurable, then $X^\infty$ is $\mathcal{B}$-measurable.
\end{corollary}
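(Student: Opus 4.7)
The plan is to invoke the third identity of Lemma~\ref{lem:standardize} and exploit the hypothesis on $X$. Since $X$ is $^*\mathcal{B}$-measurable, we have $^*E(X|^*\mathcal{B}) = X$, so that formula collapses to
$$E(E(X^\infty|\mathcal{B}); A) \simeq \,\! ^*E(X; \,\! ^*\!A)$$
for every standard $A \in \mathcal{A}$. Combining this with the first formula of the lemma yields $E(E(X^\infty|\mathcal{B}); A) \simeq E(X^\infty; A)$. Both quantities are standard real numbers, so the infinitesimal relation upgrades to a genuine equality.

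Hence $E(E(X^\infty|\mathcal{B}); A) = E(X^\infty; A)$ for every $A \in \mathcal{A}$, which forces $E(X^\infty|\mathcal{B}) = X^\infty$ $\nu$-a.s. Consequently $X^\infty$ coincides a.s.\ with the $\mathcal{B}$-measurable function $E(X^\infty|\mathcal{B})$, and the a.s.\ uniqueness clause of Lemma~\ref{lem:standardize} allows us to redefine $X^\infty$ on a null set so as to render it genuinely $\mathcal{B}$-measurable.

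I anticipate no real obstacle: the corollary is essentially a direct reading of the third formula once one recognizes that the conditional expectation of a $^*\mathcal{B}$-measurable function is itself. The only delicate point is bridging \emph{a.s.\ equal to a $\mathcal{B}$-measurable function} and \emph{$\mathcal{B}$-measurable}, which the uniqueness clause in the previous lemma resolves without extra work.
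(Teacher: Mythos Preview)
Your argument is correct and matches the paper's own proof almost line for line: invoke the third formula of Lemma~\ref{lem:standardize}, collapse $^*E(X\mid{}^*\mathcal{B})$ to $X$ by the measurability hypothesis, feed the result back into the first formula, and use that both sides are standard to upgrade $\simeq$ to equality. Your extra remark about passing from ``a.s.\ equal to a $\mathcal{B}$-measurable function'' to ``$\mathcal{B}$-measurable'' via the a.s.\ uniqueness clause is a welcome clarification that the paper leaves implicit.
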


\begin{proof}
By the third fromula of the lemma, for all standard $A \in \mathcal{A}$
  $$ E(E(X^\infty|\mathcal{B}); A)
  \simeq \,\! ^*E(^*E(X|^*\mathcal{B});\,\! ^*\!A)
  =\,\!  ^*E(X; \, ^*\!A) \simeq E(X^\infty; A), $$
which concludes the proof, since $E(X^\infty|\mathcal{B})$ and $X^\infty$ are standard. 
\end{proof}

\section{A version of the Doob-Meyer theorem}
We fix a standard filtered probability space $(\Omega, \mathcal{F}, \{\mathcal{F}_t \ (0 \le t \le1)\}, P)$ throughout this article.

\begin{theorem}(predictable case)\label{predictable}
A standard submartingale $X_t \  (0 \le t \le 1)$ of class D has a decomposition :
   $$ X_t = A_t + M_t \ \ \ (0 \le t \le 1), $$ 
where $A_t$ is a standard almost surely (a.s.) nondecreasing predictable process starting at 0 and $M_t$ is a standard martingale. $A_t$ is a.s. unique.
\end{theorem}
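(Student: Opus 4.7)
My plan is to perform a discrete Doob decomposition on an internal hyperfinite time grid and then standardize via Lemma~\ref{lem:standardize}. Fix an infinite $N\in{}^*\mathbb{N}$, let $T=\{k/N:0\le k\le N\}$, and define internally
\[
 a_k = {}^*E\bigl[{}^*X_{k/N}-{}^*X_{(k-1)/N}\bigm|{}^*\mathcal{F}_{(k-1)/N}\bigr]\ge 0,\quad A_{k/N}=\sum_{j=1}^{k}a_j,\quad M_{k/N}={}^*X_{k/N}-A_{k/N}.
\]
Then $A$ is internal, nondecreasing, ${}^*\mathcal{F}_{(k-1)/N}$-predictable with $A_0=0$, and $M$ is an internal martingale on $T$.

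The main obstacle will be verifying the uniform-integrability hypothesis of Lemma~\ref{lem:standardize} for $A_1$, since this is the only step where the class~D assumption is genuinely used. For real $\lambda>0$ introduce the internal stopping time
\[
 \tau_\lambda=\min\{k/N\in T:A_{(k+1)/N}>\lambda\}\wedge 1,
\]
which is well defined by predictability of $A$ and satisfies $A_{\tau_\lambda}\le\lambda$. Optional sampling for the internal martingale $M$ on the internal grid yields ${}^*E[A_1-A_{\tau_\lambda}]={}^*E[{}^*X_1-{}^*X_{\tau_\lambda}]$. Since $\{A_1>2\lambda\}\subseteq\{\tau_\lambda<1\}$ and $E[A_1]=E[X_1]-E[X_0]$, Markov's inequality controls ${}^*P(\tau_\lambda<1)$, and the transfer of class~D uniform integrability to internal stopping times makes ${}^*E[{}^*X_1-{}^*X_{\tau_\lambda};\tau_\lambda<1]$ vanish as $\lambda\to\infty$; together these bound ${}^*E[A_1;A_1>K]$ by any standard $\varepsilon>0$ for $K$ standard and sufficiently large.

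For each standard $t\in[0,1]$ I apply Lemma~\ref{lem:standardize} to the internal value ${}^*A_{t'}$ at a grid point $t'\simeq t$ (u.i.\ is inherited from ${}^*A_{t'}\le A_1$), producing a standard nonnegative integrable $A_t$; the first pushdown formula propagates the internal monotonicity to $A_s\le A_t$ a.s.\ for standard $s\le t$, and $A_0=0$ is immediate. For predictability, Corollary~\ref{cor:standardize} applied with $\mathcal{B}=\mathcal{F}_s$ for each standard $s<t$ shows (after choosing the grid approximation close enough to $s$ from above that the internal value is ${}^*\mathcal{F}_s$-measurable) that $A_s\in\mathcal{F}_s$; since $A$ is nondecreasing, $A_{t-}=\sup_{s<t,\,s\in\mathbb{Q}}A_s$ lies in $\mathcal{F}_{t-}$, and at continuity points of $t\mapsto E[X_t]$ it coincides with $A_t$, so the process is predictable in the usual sense.

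Setting $M_t:=X_t-A_t$, the martingale property $E[(X_t-A_t)\mathbb{I}_F]=E[(X_s-A_s)\mathbb{I}_F]$ for standard $s<t$ and $F\in\mathcal{F}_s$ follows from the second pushdown formula of Lemma~\ref{lem:standardize} applied to the internal identity ${}^*E[M_{t'}\mathbb{I}_{{}^*F}]={}^*E[M_{s'}\mathbb{I}_{{}^*F}]$ on $T$. For uniqueness, any second decomposition $X=A'+M'$ makes $A-A'$ a standard predictable martingale of integrable variation starting at $0$; lifting to the hyperfinite grid, its internal conditional increments must be simultaneously zero (as a martingale) and equal to themselves (as predictable), and the a.s.\ uniqueness clause of Lemma~\ref{lem:standardize} then transfers the internal equality down to $A=A'$ a.s.
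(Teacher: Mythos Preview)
Your high-level strategy and your uniform-integrability estimate match the paper's. The gaps appear in how you build $A_t$ from the internal process and, above all, in the predictability argument.

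The paper does \emph{not} push down $A^N_t$ separately at each $t$. It invokes Concurrence to take a $*$-finite $N$ containing every standard real in $[0,1]$, pushes down only $A^N_1$ to a single random variable $A^\infty_1$, and then \emph{defines} $M_t := E(X_1 - A^\infty_1 \mid \mathcal{F}_t)$ and $A_t := X_t - M_t$. This yields a genuine process with $M$ a martingale by construction; Lemma~\ref{lem:standardize} (third formula) is then used to check $E(A_t;F)\simeq{}^*E(A^N_t;{}^*F)$ for standard $t$ and $F$. Your regular grid $\{k/N\}$ need not contain standard $t$, so your pointwise pushdowns at approximants $t'\simeq t$ force you to relate ${}^*X_{t'}$ to $X_t$ and ${}^*\mathcal{F}_{t'}$ to $\mathcal{F}_t$ without any continuity hypothesis on $X$; they also produce only a family of $L^1$-equivalence classes rather than a jointly measurable process, so ``predictable'' is not yet meaningful.

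Your predictability step is the main missing idea. Even if each $A_t$ were shown to lie in $\mathcal{F}_{t-}$, that is strictly weaker than $\mathcal{P}$-measurability of $(t,\omega)\mapsto A_t(\omega)$, and the remark about continuity points of $t\mapsto E[X_t]$ does not close the gap. The paper's device is to pass to the product space: from $E(A_t;F)\simeq{}^*E(A^N_t;{}^*F)$ it gets, via Fubini and a monotone-class argument, $\int_A A_t\,dt\,dP\simeq{}^*\!\int_{{}^*\!A}A^N_t\,dt\,dP$ for all $A\in\mathcal{B}[0,1]\otimes\mathcal{F}$, and then applies Corollary~\ref{cor:standardize} with $\mathcal{A}=\mathcal{B}[0,1]\otimes\mathcal{F}$ and $\mathcal{B}=\mathcal{P}$, using that $A^N$ is $*$-predictable by construction. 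Your uniqueness sketch is also too loose: the paper does not ``lift'' $A-A'$ but instead rewrites $A^N_t$ as $\sum{}^*E({}^*A'_{t_{k+1}}-{}^*A'_{t_k}\mid{}^*\mathcal{F}_{t_k})$ (since $M'$ is a martingale) and reads off $E(A_t;F)=E(A'_t;F)$ for $F\in\mathcal{F}_r$, $r<t$, whence $A_t=A'_t$ a.s.\ by $\mathcal{F}_{t-}$-measurability of predictable processes.
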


\begin{proof}
For a finite set $ n  \equiv \{ 0=t_0, \ \dots \ , t_k < t_{k+1}, \ \dots \ , t_m = 1 \}$,
define $A^n_t (0 \le t \le1)$ by 
   $$  A^n_t \equiv \sum_{t_{k+1} \le t} E(X_{t_{k+1}}-X_{t_k}|\mathcal{F}_{t_k}). $$
We will show that $A^n_1$'s are uniformly integrable with respect to $n$. Note that for $k >0$
   $$ P(A^n_1 \ge k) \le \frac{1}{k}E(A^n_1)
   = \frac{1}{k}E(X_1-X_0) \ \ \ {\rm (not \ depending \ on \ } n).$$
Let ${\tau}_k = \max \{ t \in n ; A^n_t \le k \}$ for $k >0$ then
\begin{align*}
   E(A^n_1; A^n_1 \ge 2k)
   &\le 2E(A^n_1- \min(A^n_1, k); A^n_1 \ge 2k)
   \le 2E(A^n_1- \min(A^n_1, k)) \\
   &\le 2E(A^n_1- A^n_{{\tau}_k}) = 2E(X_1- X_{{\tau}_k})
   = 2E(X_1- X_{{\tau}_k}; A^n_1 \ge k)  \\
   & \le \epsilon(k) \ \ \ (\epsilon(k) \downarrow 0 \ \ (k \rightarrow \infty),  \ \     \epsilon(k)  {\rm : depending \ only \ on \ } k),
\end{align*}
since $X_t$ is of Class D by assumption.

For a *-finite set $N \equiv \{ 0=t_0, \ \dots \ , t_k < t_{k+1}, \  \dots \ , t_M = 1 \}$, by the Transfer Principle, we can define the *-stochastic process $A^N_t (0 \le t \le 1)$ by 
$$  A^N_t \, \equiv \, ^* \! \! \! \sum_{t_{k+1} \le t} {^*E(^*X_{t_{k+1}} - \,\! ^*X_{t_k}|{^*\mathcal{F}}_{t_k}}) \ \ \ \ \ \ \ (^*\sum \rm{: *-finite \ sum}). $$
For all $0<k \in \mathbb{R}$, $^*E(A^N_1; A^N_1 \ge 2k) \le \,\! ^*\epsilon(k)$ again by the Transfer Principle. By the Concurrence Theorem, we can assume $N$ contains all the standard reals in [0,1]. 

By Lemma \ref{lem:standardize} with $\mathcal{A} = \mathcal{F}$, there exists a standard nonnegative integrable random variable $A_1^\infty $ such that $ E(A_1^\infty; F) \simeq \,\! ^*E(A^N_1; \,\! ^*F) $
for all standard $F \in \mathcal{F}$. Define $M_t$ and $A_t$ by
   $$ M_t \equiv E(X_1-A_1^\infty|\mathcal{F}_t), \ \ \ \ \ A_t
   \equiv X_t - M_t = X_t - E(X_1-A_1^\infty|\mathcal{F}_t). $$ 
By the definition of $A^N_t$, for all $t \in N$ 
   $$ A^N_t = \,\! ^*X_t - \,\! ^*E(^*X_1-A^N_1|^*\mathcal{F}_t). $$
Again using Lemma \ref{lem:standardize} with $\mathcal{A} = \mathcal{F}$  and $\mathcal{B} = \mathcal{F}_t$, we obtain for all standard $t \in N$ and all standard $F \in \mathcal{F}, E(A_t; F) \simeq \,\! ^*E(A^N_t; \,\! ^*F)$.  Since $A^N_t$ is *-a.s. nondecreasing starting at 0 by construction, $A_t$ is also a.s. nondecreasing starting at 0.
Hence for all standard $0 \le p < q \le 1$ and all standard $F \in \mathcal{F}$,
   $$ \int_{[p,q) \times F} A_t \ dtdP \simeq 
   \raisebox{2.0ex}{\rm \scriptsize *} \! \! \! 
   \int_{^*[p, q) \times ^*\!F} A^N_t \ dtdP, $$
by the Fubini theorem and Transfer Principle. The monotone class theorem yields that for all $A \in \mathcal{B}[0,1] \otimes \mathcal{F}, $ 
   $$ \int_A A_t \ dtdP \simeq 
   \raisebox{2.0ex}{\rm \scriptsize *} \! \! \! 
   \int_{^*\!A} A^N_t \ dtdP, $$
where $\mathcal{B}[0,1]$ is the Borel $\sigma$-field on [0,1]. 
Since $A^N_t$ is *-predictable by construction, we see that $A_t$ is also predictable by applying Corollary \ref{cor:standardize} with $\mathcal{A} = \mathcal{B}[0,1] \otimes \mathcal{F}$ and $\mathcal{B} = \mathcal{P}$, where $\mathcal{P}$ is the predictable $\sigma$-field (the $\sigma$-field generated by the set $ \{ \{0\} \times F_0, \  (s,t] \times F_s \ | \ F_0 \in \mathcal{F}_0, \ F_s \in \mathcal{F}_s, \  0 \le s < t \le1 \} $).    

Finally,  we will show the uniqueness of the decomposition. Let another decomposition be
$ X_t = A^{'}_t + M^{'}_t. $ Here $M^{'}_t$ is a martingale, so that 
   $$ A^N_t \equiv \,\! 
   ^*\!\!\! \sum_{t_{k+1} \le t} \,\! ^*E(^*X_{t_{k+1}}- \,\! ^*X_{t_k}|
   ^*\mathcal{F}_{t_k})  
   = \,\! ^*\!\!\! \sum_{t_{k+1} \le t} \,\! ^*E(^*A^{'}_{t_{k+1}}
    \, -\,  ^*A^{'}_{t_k}|
   ^*\mathcal{F}_{t_k}). $$
Since $N$ contains all the standard reals in [0,1], for all standard $r, t\in [0,1]$ with $r < t $ and all standard $F \in \mathcal{F}_r $,
   $$ E(A_t; F) \!\simeq\! ^*\!E(A^N_t; \,\! ^*\!F)
   \!=\! ^*\!E( ^*\!\!\! \sum_{t_{k+1} \le t}
   \!\!\!^*\!E(^*\!A^{'}_{t_{k+1}} -\! ^*\!A^{'}_{t_k}|
   ^*\!\mathcal{F}_r); \, ^*\!F) \!=\! ^*\!E(^*\!A^{'}_t; \,\! ^*\!F)
   \!=\! E(A^{'}_t; F), $$
which shows $A_t = A^{'}_t$ a.s. because both $A_t$ and $A^{'}_t$ are standard and predictable so that they are $\mathcal{F}_{t-}$measurable.
\end{proof}

\begin{corollary}(natural case)
$A_t$ is also a natural process and a.s. unique.
\end{corollary}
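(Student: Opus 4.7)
To prove the corollary I would split it into two pieces: first, that the predictable process $A_t$ from the theorem is natural, in the sense that $E[H_1 A_1]=E\int_0^1 H_{s-}\,dA_s$ for every bounded c\`adl\`ag martingale $H_t$; and second, that any other decomposition $X_t=A'_t+M'_t$ with $A'$ adapted, nondecreasing, starting at $0$, and natural has $A'_t=A_t$ a.s. Both pieces rest on the same hyperfinite identity for the compensator $A^N_t$.

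For naturality, fix such an $H$ and apply the second formula of Lemma \ref{lem:standardize} with $Y=H_1$ to obtain $E[H_1 A_1]\simeq{}^*E({}^*H_1\cdot A^N_1)$. Expanding $A^N_1$ as a $^*$-finite sum, using ${}^*E(UW)={}^*E({}^*E(U|{}^*\mathcal{F}_{t_k})\cdot W)$ when $W$ is ${}^*\mathcal{F}_{t_k}$-measurable, and invoking ${}^*E({}^*H_1|{}^*\mathcal{F}_{t_k})={}^*H_{t_k}$, this rewrites as ${}^*\!\!\sum_{t_{k+1}\le 1}{}^*E({}^*H_{t_k}({}^*X_{t_{k+1}}-{}^*X_{t_k}))$. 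Substituting $X=A+M$ (from the theorem) and killing the increments of $M$ by the same conditioning, the sum collapses to the hyperfinite left-endpoint Riemann--Stieltjes sum ${}^*\!\!\sum_{t_{k+1}\le 1}{}^*E({}^*H_{t_k}({}^*A_{t_{k+1}}-{}^*A_{t_k}))$. Identifying its standard part with $E\int_0^1 H_{s-}\,dA_s$ by the Fubini/monotone-class pushdown used in the theorem, now against the Dol\'eans measure $dA_s\,dP$ on $[0,1]\times\Omega$, yields naturality.

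For uniqueness, the hyperfinite compensator depends only on $X$, and the martingale property of $M'$ gives $A^N_t={}^*\!\!\sum{}^*E({}^*A'_{t_{k+1}}-{}^*A'_{t_k}|{}^*\mathcal{F}_{t_k})$. A single reshuffling---pull ${}^*H_1$ inside via tower and the martingale identity, and simultaneously drop the inner ${}^*\mathcal{F}_{t_k}$-conditional against the now-measurable ${}^*H_{t_k}$---gives ${}^*E({}^*H_1\cdot A^N_1)={}^*\!\!\sum{}^*E({}^*H_{t_k}({}^*A'_{t_{k+1}}-{}^*A'_{t_k}))$; by the same pushdown applied to the standard natural $A'$, the right side has standard part $E[H_1 A'_1]$. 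Combined with Part 1, this forces $E[H_1 A_1]=E[H_1 A'_1]$ for every bounded c\`adl\`ag martingale $H$. Choosing $H_t=E[Y|\mathcal{F}_t]$ for bounded $Y\in L^\infty(\mathcal{F}_1)$ gives $E[YA_1]=E[YA'_1]$, hence $A_1=A'_1$ a.s.; then $M_t=E[X_1-A_1|\mathcal{F}_t]=E[X_1-A'_1|\mathcal{F}_t]=M'_t$ forces $A_t=A'_t$ a.s.\ for every $t$.

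The main nonroutine point is the identification of the hyperfinite left-Riemann--Stieltjes sum with the standard integral $E\int_0^1 H_{s-}\,dA_s$: the theorem's Fubini step was against Lebesgue measure on time, whereas here one integrates against the Dol\'eans measure of $A$, which is not an a priori product. The cleanest route is probably to verify the identity first on simple test martingales $H_t=\mathbb{I}_{t\ge r}\cdot Z$ with $Z\in L^\infty(\mathcal{F}_r)$ (for which the sum telescopes and the identity is immediate) and then extend by a monotone class argument on the bounded martingales $H$.
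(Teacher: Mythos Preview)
Your argument is correct and matches the paper's approach: the same chain of hyperfinite identities, just read in the opposite direction (the paper starts from $E\!\int_0^1 N_{t-}\,dA_t$, passes through the $A^N$-sum, and telescopes using the ${}^*\mathcal{F}_{t_k}$-measurability of $A^N_{t_{k+1}}$ rather than substituting $X=A+M$; the uniqueness step is identical). One small slip in your closing remark: the processes $H_t=\mathbb{I}_{t\ge r}Z$ with $Z\in L^\infty(\mathcal{F}_r)$ are not martingales, so they cannot serve as the ``simple test martingales'' you propose --- the paper itself simply asserts the Riemann--Stieltjes identification $E\!\int_0^1 N_{t-}\,dA_t \simeq {}^*\!\sum_k {}^*E\bigl({}^*N_{t_k}({}^*A_{t_{k+1}}-{}^*A_{t_k})\bigr)$ as its first step without justification.
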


\begin{proof}
For all standard bounded martingale $N_t$,
\begin{align*}
   E(\int_0^1 N_{t-}dA_t)
   &\simeq \!^*\!\sum_k \!^*\!E(\!^*\!N_{t_k}(\!^*\!A_{t_{k+1}} - \!^*\!A_{t_k})) 
   = \!^*\!\sum_k \!^*\!E(\!^*\!N_{t_k} \!^*\!E(\!^*\!A_{t_{k+1}}- \!^*\!A_{t_k}|
  ^*\!\mathcal{F}_{t_k})) \\
   &= \!^*\!\sum_k \!^*\!E(\!^*\!N_{t_k} \!^*\!E(\!^*\!X_{t_{k+1}}
   - \!^*\!X_{t_k}|\!^*\!\mathcal{F}_{t_k}))
   = \!^*\!\sum_k \!^*\!E(\!^*\!N_{t_k}(A^N_{t_{k+1}}-A^N_{t_k})) \\
   &= \!^*\sum_k \!^*\!E^* (\!^*\!N_{t_{k+1}}A^N_{t_{k+1}} - \!^*\!N_{t_k}
   A^N_{t_k})
   \ \ \ \ \ \ \ {N_t \rm : \ martingale} \\
   &= \!^*\!E(\!^*\!N_1A^N_1)  \simeq E(N_1A_1),
   \ \ \ \ \ \ \ {\rm Lemma \ \ref{lem:standardize}.} 
\end{align*}
Thus we have $E(\int_0^1 N_{t-}dA_t)=E(N_1A_1)$ as required. For the uniqueness, let another standard natural process be $A^{'}_t$. Recall that $N$ contains all the standard reals in [0,1]. Note that from the above expressions, for all standard bounded martingale $N_t$, $ E(N_1A_1) = E(N_1A^{'}_1). $ Hence $A_1 = A^{'}_1$  a.s., which in turn means $A_t = A^{'}_t$ a.s. because
   $$ A^{'}_t = X_t - E(X_1-A^{'}_1|\mathcal{F}_t). $$ 
\end{proof}

\begin{corollary}(Dol{\'e}ans-Dade)
A standard integrable nondecreasing process $A_t \ (0 \le t \le 1)$ is predictable if and only if it is natural.
\end{corollary}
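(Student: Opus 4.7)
The plan is to apply the preceding theorem and its natural-case corollary to $A_t$ itself, and then play the two uniqueness statements off against each other. Since the equivalence ``predictable $\Leftrightarrow$ natural'' really just compares the two Doob--Meyer decompositions of a single submartingale, the proof should be almost formal.

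First I would verify that $A_t$ is a class D submartingale, so that both the theorem and the natural-case corollary are applicable. After subtracting $A_0$ (which is $\mathcal{F}_0$-measurable and trivially both predictable and natural) we may assume $A_0 = 0$. Because $A_t$ is nondecreasing and $E(A_1) < \infty$, every $A_\tau$ is dominated by the integrable variable $A_1$, so the family $\{A_\tau : \tau \text{ a stopping time}\}$ is uniformly integrable and $A_t$ is a submartingale of class D.

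Next I would apply the theorem to obtain a decomposition $A_t = \tilde{A}_t + M_t$, where $\tilde{A}_t$ is a predictable nondecreasing process starting at $0$ and $M_t$ is a martingale, and then invoke the natural-case corollary to upgrade $\tilde{A}_t$ to being simultaneously predictable and natural.

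Finally I would conclude by uniqueness. If $A_t$ is already predictable, then $A_t = A_t + 0$ is itself a valid predictable Doob--Meyer decomposition of the submartingale $A_t$, so the uniqueness part of the theorem forces $\tilde{A}_t = A_t$ a.s., and hence $A_t$ inherits the naturality of $\tilde{A}_t$. Conversely, if $A_t$ is natural, then $A_t = A_t + 0$ is a valid natural decomposition of itself, so the uniqueness part of the natural-case corollary again forces $\tilde{A}_t = A_t$ a.s., and hence $A_t$ inherits the predictability of $\tilde{A}_t$. No substantive obstacle arises: the corollary is essentially a formal corollary of having uniqueness in both the predictable and the natural versions of the decomposition.
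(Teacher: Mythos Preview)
Your argument is correct and follows the same logic as the paper: both proofs reduce the equivalence to the two uniqueness statements (predictable and natural) applied to the trivial decomposition $A_t = A_t + 0$. The only cosmetic difference is that the paper phrases the identification in terms of the nonstandard process $A^N_t$ (writing $E(A_t; F) \simeq {}^*\!E(A^N_t; {}^*\!F)$) while you stay at the standard level with $\tilde A_t$; since $\tilde A_t$ is precisely the push-down of $A^N_t$ via Lemma~\ref{lem:standardize}, this amounts to the same thing.
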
  

\begin{proof}
Note that $A_t$ is a submartingale of Class D. Suppose that $A_t$ is predictable or natural. The uniqueness of the decomposition tells us that for all standard $F \in \mathcal{F}$ and all standard $t \in [0,1]$, 
$$ E(A_t; F) \simeq \,\!^*\!E(A^N_t; \,\!^*F), $$
where $A^N_t$ is constructed in the proof of the theorem. The rest is easy.
\end{proof}

\section{Concluding Remarks}
Although we follow Rao's \cite{Rao} idea, we do not need the Dunford-Pettis theorem or Komlos's lemma thanks to nonstandard analysis. The  proofs of the predictability, the uniqueness, and the Dol{\'e}ans-Dade theorem are new and straightforward again owing to nonstandard analysis.  

\subsection*{Disclosure Statement and Funding}
There are no interests to declare. No funding was received.

\section{Appendix: a quick introduction to nonstandard analysis }
Nonstandard analysis is a theory founded by Abraham Robinson in the 1960s, motivated largely by the revival of Leibnizian infinitesimals. He constructed a proper extension of $\mathbb{R}$ denoted by $^*\mathbb{R}$, which is logically similar to $\mathbb{R}$ but includes ideal elements such as infinitesimals and infinite numbers. In essence, he  constructed a nonstandard extension $^*U (\ni {^*\mathbb{R}})$ (Theorem \ref{extension}) of a universe (Definition \ref{universe}) $U (\ni \mathbb{R})$, where $^*U$ is logically similar to $U$ but includes ideal elements. 

We need several definitions and lemmas to prove Thoerem \ref{extension}.

\begin{definition}
A (logical) formula is defined  recursively as follows: 
\begin{enumerate}
\item For variables or constants $u$ and $v$, $u=v$ and $u \in v$ are (atomic) formulae,
\item If $\phi$ is a formula, $\lnot\phi$, $\exists x \phi$, and $\forall x \phi$ are formulae, 
\item If $\phi$ and $\psi$ are formulae, $\phi \land \psi$, $\phi \lor \psi$, and $\phi \to \psi \equiv \lnot\phi \lor \psi $ are formlae, 
\item Only those obtained by the above rules are formlae.
\end{enumerate}
A bounded variable $x$ in $\phi$ is a variable that appears in the form of  $\exists x \phi$ or $\forall x \phi$. Other variables in $\phi$ are free variables. A sentence is a formula without free variables.
\end{definition}

\begin{definition}\label{universe}
A universe $U$ is a set satisfying the following conditions:
\begin{enumerate}
\item $u \in v$ and $v \in U$ imply $u \in U$, 
\item $u \in U$ and $v \in U$ imply $\{u, v\} \in U$,
\item $u \in U$ implies $\bigcup u \in U$,
\item $u \in U$ implies $\mathcal{P}(u) \in U$, where $\mathcal{P}(u)$ is the power set of $u$. 
\end{enumerate}
\end{definition}

\begin{example}(Universe containing $\mathbb{R}$)
Let $V_0 = \mathbb{R}$ and define $V_{n+1}$ by $V_{n+1} = \bigcup V_n$ inductively. Set $U_0 = \bigcup V_n$. Define $U_{n+1}$ by $U_{n+1} = U_n \cup \mathcal{P}(U_n)$ inductively. Set $U = \bigcup U_n$. It is straightforward to verify that $U$ is a universe.
\end{example}
Note that $U$ contains various $\mathbb{R}$-related objects such as real numbers themselves, functions on $\mathbb{R}$, and binary relations on $\mathbb{R}$ and so on. 
Usually, we adopt a universe $U$ that has all relevant mathematical objects.

\begin{definition}
A filter basis $\mathcal{F}$ on a set $I$ is a subset of $\mathcal{P}(I)$ satisfiying (1) and (2). A filter $\mathcal{F}$ is a subset satisfying (1), (2) and (3). An ultrafilter $\mathcal{F}$ is a subset satisfying (1), (2), (3) and (4). 
\begin{enumerate}
\item $\phi \not\in \mathcal{F}$ and $I \in \mathcal{F}$, 
\item If $A \in \mathcal{F}$ and $B \in \mathcal{F}$, then $A \cap B \in \mathcal{F}$,
\item If  $A \in \mathcal{F}$ and $A \subseteq B$, then $B \in \mathcal{F}$,
\item If $\mathcal{G}$ is a filter and $\mathcal{F} \subseteq \mathcal{G}$, then $\mathcal{F} = \mathcal{G}$, that is, $\mathcal{F}$ is maximal under $\subseteq$.
\end{enumerate}
\end{definition}

\begin{lemma}\label{ultraexist}
For a filter basis $\mathcal{F}_0$ on $I$, there exists an ultrafilter $\mathcal{F}$ containing $\mathcal{F}_0$.
\end{lemma}
\begin{proof}
Let $\mathcal{F}_1 = \{ X \in \mathcal{P}(I) | X \supseteq A \ {\rm for \ some} \ A \in \mathcal{F}_0 \}$. It is easy to verify that $\mathcal{F}_1$ is a filter. If $\mathcal{F}_1$ is not maximal, there exsits a filter $\mathcal{F}_2 \supsetneq \mathcal{F}_1$. If $\mathcal{F}_2$ is not maximal, there exsits a filter $\mathcal{F}_3 \supsetneq \mathcal{F}_2$ and so on. Finally we have a maximal $\mathcal{F}$. More formally, the existence of $\mathcal{F}$ follows by Zorn's lemma. 
\end{proof}

\begin{lemma}\label{either}
For an ultrafilter $\mathcal{F}$ on $I$ and $A \subseteq I$, either $A \in \mathcal{F}$ or   
$I - A \in \mathcal{F}$ holds.
\end{lemma}
\begin{proof}
Suppose that $A \not\in \mathcal{F}$. Let $\mathcal{G}  = \{X \in \mathcal{P}(I) | X \cup A \in \mathcal{F} \}$. It is a routine to check that $\mathcal{G}$ is a filter containing $\mathcal{F}$. Hence $\mathcal{G} = \mathcal{F}$ by hypothesis. Obviously $I-A \in \mathcal{G}$. This completes the proof. 
\end{proof}

\begin{definition}(Ultrapower)
Let $V$ be an infinite set and $I$ be an infinite index set. Let denote the set of all maps from $I$ to $V$ by $V^I$. Let $<a(i)>, <b(i)> \in V^I$. Define the equivalence relation on $V^I$ by $\{i \in I | a(i) =b(i) \} \in \mathcal{F}$ (this is well-defined if $\mathcal{F}$ is an ultrafilter on $I$). The ultrapower of $V$ over an ultrafilter $\mathcal{F}$ on the index set $I$ is the set of equivalence classes of $V^I$.
\end{definition}

\begin{theorem}\label{extension}(Nonstandard Extension)
For a given universe $U$, there exsits a nonstandard extension $^*U$ and a binary relation $^* \! \! \in$ on it that satisfy the following two conditions:
\begin{enumerate}
\item the Transfer Principle: There exists an injective map $^*: U \rightarrow {^*U}$ such that any sentence $\phi$ in $U$ holds if only if the "corresponding" sentence $^*\phi$ holds in $^*U$. Here the "corresponding" sentence $^*\phi$ is defined by replacing $\in$ with $^*\!\in$ and $c$'s (constants) with $^*c$'s in $\phi$,
\item the Concurrence Principle: For a formula $\phi(a,b)$ "concurrent" with respect to $A (\in U)$, then there exists $b \in {^*U}$ such that $^*\phi(\,^*a, b)$ holds for all $a \in A$. Here "conccurent" with respect to $A$ means that for any finite collection $a_i \in A (\in U)$, there exists $b \in U$ such that $\phi(a_i, b)$ holds for all $i$.
\end{enumerate}
\end{theorem}
\begin{proof}
The construction of $^*U$ proceeds as follows. Consider the index set $I$ of all finite subsets of $U$. For each $i \in I$, let $\mu(i) = \{ j \in I \mid i \subseteq j \}$. Since $\mu(i_1) \cap \mu(i_2) \cap \cdots \cap \mu(i_n) = \mu(i_1 \cup i_2 \cup \cdots \cup i_n)$, this family is a filter basis on $I$ so that  there exists an ultrafilter $\mathcal{F}$ on $I$ containing all the $\mu(i)$'s by Lemma \ref{ultraexist}.  Denote the ultrapower of $U$ over $\mathcal{F}$ by $^*U$ and the equivalence class by $[a(i)], [b(i)]$. The binary relation $[a(i)] \, {^*\!\!\in}\ [b(i)]$ is also well-defined by $\{ i \in I \mid a(i) \in b(i) \} \in \mathcal{F}$. The map * is defined by $a \mapsto [a(i) = a]$. This map is injective by definition.

The Transefer Principle is the special case of the next lemma. For the Concurrence Principle, by assumption, there exists $b(i)$ such that $\phi(a, b(i))$ holds for all $a \in i \cap A$, we obtain the desired result again using the next lemma.
\end{proof}

\begin{lemma}(\cancel{L}os's Theorem)
Under the same conditions as in Theorem \ref{extension}, for any formula $\phi(x,y,\dots,z)$, $^*\phi([a(i)], [b(i)], \cdots, [c(i)])$ holds if only if
 
$\{i \in I \ |\ \phi(a(i), b(i), \cdots ,c(i)) {\rm \ holds} \} \in \mathcal{F}$, where $x, y, \cdots, z$ are free variables.
\end{lemma}
\begin{proof}
We apply mathematical induction on the number of logical symbols in $\phi(x,y, \dots, z)$. First, if there is no logical symbols in $\phi(x,y,\cdots, z)$, $\phi(x,y,\cdots, z)$ is $x\in y$ or $x=y$ so that the conclusion follows by definiton. Next, using De-Morgan's law, it suffice to prove the cases  of $\exists, \land$ and $\lnot$. Consider the case of $\exists$. Clearly, $\exists z \phi([a(i)],[b(i)], \cdots, z)$ holds if only if for some ${c(i)}$   $\phi([a(i)],[b(i)], \cdots, [c(i)])$ holds. By the hypothesis of induction, this occurs if only if $\{i \in I \ |\ \phi(a(i), b(i),$

$\cdots, c(i)) {\rm \ holds} \} \in \mathcal{F}$. The other cases are similar and left to the reader. The key to the proofs is Lemma \ref{either}.  
\end{proof}

\begin{definition}(Standard, Internal, External)
An entity $u \in U$ and the correponding $^*u \in {^*U}$ are called standard. An entity $v \in {^*U}$ such that  $v \,^*\!\in {^*u}$ for some standard $u$ is called internal. Otherwise $v$ is called external.
\end{definition} 

\begin{example}(Transfer Principle I: Embedding, Exstension, *-Omission)
\begin{enumerate}
\item If $A \in U$ and $a \in A$, then $a \in U$ by the transitivity of $U$. Thus, $^*a$ is defined and $^*a  {\,^*\!\in} {\,^*\!A}$. Since the map $^*: U \mapsto \,^*U$ is injective, $A$ is embedded into $^*A$. We often assume $A \subseteq {^*A}$, if there is no confusion,
\item For $a, b \in \mathbb{R}$ $a<b$ if only if ${^*a} {\ ^*\!\!<} {\ ^*b}$. If we assume $\mathbb{R} \subseteq {^*\mathbb{R}}$ as in (1), $^*\!\!<$ is regarded as an extension of $<$ so that $^*$ is often omitted,
\item For $A, B \in U$ and $f: A \mapsto B$, $b=f(a)$ if and only if $^*b= {^*f(^*a)}$. If we assume $A \subseteq {^*A}$ and $B \subseteq {^*B}$ as in (1), $^*f$ is again an extension of $f$ so that $^*$ is often omitted.
\item For a function $f$ on $\mathbb{R}$,
$$\forall x \in \mathbb{R} \, \forall y \in \mathbb{R} \ f(x+y) = f(x) + f(y)$$
if only if 
$$\forall x \,^*\!\!\in {^*\mathbb{R}} \, \forall y \,^*\!\!\in {^*\mathbb{R}} \ \ {^*f}(x \,^*\!\!+ y) = {^*f}(x) \,^*\!\!+ {^*f}(y).$$
We often omit * from $^*f$ and $^*+$.

\end{enumerate}
\end{example}

\begin{definition}(Transfer Principle II: *-Property, *-Finitenss, *-Finite Sum)
\begin{enumerate}
\item Let $P (\in U)$ define some property $Prop$. We say $u$ is $Prop$ if $u \in P$. In this situation, we say $v$ is *-$Prop$ if $v \,^*\!\in \,^*P$. An example is the following.  For $A \in U$, if $P \equiv \mathcal{P}_F(A)$ (the set of all the finite subsets of $A$), then $u (\in P)$ is a finite subset of $A$ and $v (\,^*\!\in \,^*P)$ is a *-finite subset of $^*A$.
\item Denote the finite sum of the elements of a finite subset of $\mathbb{R}$ by $\Sigma: \mathcal{P}_F(\mathbb{R}) \mapsto \mathbb{R}$. Then, we obtain the *-finite sum $^*\Sigma: \,^*\mathcal{P}_F(\mathbb{R}) \mapsto \ ^*\mathbb{R}$. That is, *-finite sum is defined on all the *-finite subset of $^*\mathbb{R}$.
\end{enumerate}
\end{definition}
 
\begin{example}(Concurrence Principle)
\begin{enumerate}
\item Since the formula $\phi(a,b) \equiv a<b \land b \in \mathbb{R}$ is concurrent with respect to $\mathbb{R}$, we obtain $b \,^*\!\in \,^*\mathbb{R}$ such that for any $a \in \mathbb{R}$ $^*a {\,^*\!\!<}  b$. That is, $b$ is an infinite number and $1/b$ is an infinitesimal. For $x, y \,^*\!\in \,^*\mathbb{R}$, we write $x \simeq y$ if $x-y$ is infinitesimal.
\item The formula $\phi(a,b) \equiv a \in b \land b \in \mathcal{P}_F([0,1])$ is concurrent with respect to $[0,1]$. Hence, there exists $b \,^*\!\in \,^*\mathcal{P}_F([0,1])$ such that $^*a \,^*\!\in b$ for all $a \in [0,1]$. In other words, there exists
*-finite subset of $^*[0,1]$ that contains all the elements of $[0,1]$, if we assume $[0,1] \subseteq \,^*[0,1]$.
\end{enumerate}
\end{example}

\begin{example}(Standard Part)
If $c {\,^*\!\in} {\,^*\mathbb{R}}$ is finite, $\sup(\{ x \in \mathbb{R} \,|\, ^*x  \,^*\!\!<  c \} )$ is called the standard part of $c$ and denoted by ${\rm st}(b)$. It is easy to check $c \simeq {\rm st}(b)$.
\end{example}

\begin{lemma}(Uniform Continuity)\label{uniformcontinuity} 
Let $f(x)$ be a function on $[0,1]$. Then  $f(x)$ is uniformly contiuous on $[0,1]$ if only if $\forall x,y {\,^*\!\in} {^*[0,1]} \  (x \simeq y \to f(x) \simeq f(y))$.
\end{lemma}
\begin{proof}
Suppose that $f(x)$ is uniformly contiuous on $[0,1]$. Then by definition $\forall \epsilon \in \mathbb{R}_+ \exists \delta \in \mathbb{R}_+ \ (|x-y|<\delta \to |f(x)-f(y)|<\epsilon)$. Fix any $\epsilon \in \mathbb{R}_+$. For $x,y \,^*\!\in {^*[0,1]}$, if $x \simeq y$ then obviously $|x-y| <\delta$. By the Transfer Principle $|f(x)-f(y)|<\epsilon$ so that $f(x)\simeq f(y)$ because $\epsilon \in \mathbb{R}_+$ is arbitary. Conversely, suppose that $\forall x,y {\,^*\!\in} {^*[0,1]}\  (x \simeq y \to f(x) \simeq f(y))$. Fix any $\epsilon \in \mathbb{R}_+$. If $|x-y|$ is less than some positive infinitesimal, $|f(x)-f(y)|<\epsilon$ by assumption. That is $\exists \delta {\,^*\!\in} {\,^*\mathbb{R}_+} (|x-y|<\delta \to |f(x)-f(y)|<\epsilon)$. By the Transfer Principle we obtain $\exists \delta \in \mathbb{R}_+ (|x-y|<\delta \to |f(x)-f(y)|<\epsilon)$.
\end{proof}

\begin{definition}(Good *-Partition of $[0,1]$) 
$p \equiv \{0=a_0<a_1<\cdots<a_i<\cdots<a_n =1\} \ (a_i \in \mathbb{R},n \in \mathbb{N})$ is a partition of $[0,1]$. By the Transfer Principle, $P\equiv \{0=a_0<a_1<\cdots<a_i<\cdots<a_N =1\} \ (a_i \in {^*\mathbb{R}}, N \in {^*\mathbb{N}})$
 is a *-partition of $^*[0,1]$. $P$ is called "good" if $\mathbb{R} \subseteq P$ The term "good" is used only in the next example. 
\end{definition}

\begin{example}(Riemann-Stieltjes Integral)
Let $f(x): [0,1] \mapsto \mathbb{R}$ be a continuous function and $g(x): [0,1] \mapsto \mathbb{R}$ be a non-decreasing function. For $p \equiv \{0=a_0<a_1<\cdots<a_n =1\}$ (a partition of $[0,1]$), set $ S(p) \equiv \sum_{k=1}^n f(a_{k-1})(g(a_k)-g(a_{k-1}))$. $S$ is a function from the set of all the partition of [0,1] $I$ to $\mathbb{R}$ so that by the Transfer Principle, $^*S: {^*I} \mapsto {^*\mathbb{R}}$. In other words, for a *-partition of $^*[0,1], P \equiv \{0=a_0 < a_1< \cdots <a_N=1\}$ $^*S(P) = {^*\sum}_{k=1}^N f(a_{k-1})(g(a_k) -g(a_{k-1}))$, where $N \in {^*\mathbb{N}}$ and $^*\sum$ is *-finite sum. Note that *'s are omitted from $^*f, ^*g$ and $^*-$. Suppose that $P$ and $P'$ are "good" *-partitions of $^*[0,1]$. Then $^*S(P) \simeq {^*S(P')}$. To see this, let $P''$ be the combined *-partitions of $P$ and $P'$. Then, it is a routine to verify that $S(P) \simeq S(P'')$ and $S(P') \simeq S(P'')$ by using Lemma \ref{uniformcontinuity}.  
\end{example}


\begin{thebibliography}{9}
\bibitem{Bass}
R. F. Bass, \textit{Stochastic processes}, Cambridge University Press, 2011. 
\bibitem{Beigl}
M. Beiglb\"{o}ck, W. Schachermayer, and B Veliyev, A short proof of the Doob-Meyer theorem, \textit{Stoch. Process. their Appl.}, {\bf 122}(2012), 1204-1209.
\bibitem{Davis}
M. Davis,  \textit{Applied nonstandard analysis},
John Wiley \& Sons, 1977 , Dover Publications, 2005.
\bibitem{Fajardo}
S. Fajardo and H. J. Keisler, \textit{Model theory of stochastic processes},
 A K Peters Ltd., 2002. 
\bibitem{Hoover}
D. N. Hoover and E. Perkins, Nonstandard construction of the stochastic integral and applications to stochastic differential equations I, \textit{Trans. Amer. Math. Soc.}, {\bf 275}(1983), 1-36.
\bibitem{Kallenberg}
O. Kallenberg, \textit{Foundations of modern probability}, Springer Verlag, 1997.
\bibitem{Lindstrom}
T. L. Lindstr{\o}m, Hyperfinite stochastic integration I: The nonstandard theory, \textit{Math. Scand.}, {\bf 46}(1980), 265-292.
\bibitem{Rao}
K. M. Rao, 
On decomposition theorems of Meyer,
\textit{Math. Scand.}, {\bf 24}(1969), 66-78.
\end{thebibliography}
\end{document}